\newcommand{\abs}[1]{\left| #1 \right|}
\newcommand{\D}{\mathcal{D}}
\newcommand{\R}{\mathbb{R}}
\newcommand{\tr}{{\rm tr}}
\newcommand{\C}{\mathbb{C}}
\newcommand{\Q}{\mathbb{Q}}
\newcommand{\GL}{\operatorname{GL}}
\newcommand{\SL}{\operatorname{SL}}
\newcommand{\SO}{\operatorname{SO}}
\newcommand{\SU}{\operatorname{SU}}
\newcommand{\CC}{\mathcal{C}}
\newcommand{\Z}{\mathbb{Z}}
\newcommand{\NN}{\mathbb{N}}
\newcommand{\HH}{\mathbb{H}}
\newcommand{\bs}{\backslash}
\newcommand{\Diag}{\text{Diag}}
\newcommand{\Ad}{\mathsf{Ad}}
\renewcommand{\O}{\mathcal{O}}
\newtheorem{theorem}{Theorem}
\newtheorem{proposition}[theorem]{Proposition}
\newtheorem{corollary}[theorem]{Corollary}
\newtheorem{lemma}[theorem]{Lemma}
\numberwithin{equation}{section}
\numberwithin{theorem}{section}
\newcommand{\Isom}{{\rm Isom}}
\begin{document}

\title[]{Thin subgroups isomorphic to Gromov--Piatetski-Shapiro lattices}
\author{Samuel A.\ Ballas %and D.\ D.\  Long
}
\email{ballas@math.fsu.edu}
%\email{long@math.ucsb.edu}
\date{\today}
\address{Department of Mathematics\\ 
Florida State University\\ Tallahassee, FL 32306, USA}
%\address{Department of Mathematics\\
%University of California Santa Barbara\\
%Santa Barbara, CA 93106, USA}

\maketitle

\begin{abstract}
	In this paper we produce many examples of thin subgroups of special linear groups that are isomorphic to the fundamental group non-arithmetic hyperbolic manifolds. Specifically, we show that the non-arithmetic lattices in $\SO(n,1,\R)$ constructed by Gromov and Piatetski-Shapiro can be embedded into $\SL(n+1,\R)$ so that their images are thin subgroups. 
	\end{abstract}

\tableofcontents

\section*{Introduction}

Let $G$ be a semisimple Lie group and let $\Gamma$ be a finitely generated subgroup. We say that $\Gamma$ is a \emph{thin subgroup of G} if there is a lattice $\Lambda\subset G$ containing $\Gamma$ such that 
\begin{itemize}
	\item $\Gamma$ has infinite index in $\Lambda$
	\item $\Gamma$ is Zariski dense in $G$. 
\end{itemize}
Intuitively, such groups are very sparse in the sense that they have infinite index in a lattice, but at the same time are dense in an algebraic sense. Note, that if one relaxes the first condition above, then $\Gamma$ would be a lattice, so another way of thinking of thin groups is as infinite index analogs of lattices in semisimple Lie groups. 

Over the last several years, thin groups have been the subject of much research, much of which has been motivated by the observation that many theorems and conjectures in number theory can be phrased in terms of counting primes in orbits of groups that are ``abelian analogs of thin groups.'' Here are two examples. First, let $G=\R$, $b,m\in \NN$ such that $(b,m)=1$, $\Delta=\Z$ and $\Gamma=m\Z$. The orbit $b+\Gamma$ is an arithmetic progression and Dirichlet's theorem on primes in arithmetic progressions is equivalent to this orbit containing infinitely many primes. Next, let $G=\R^2$, $\Delta=\Z^2$, $\Gamma=\langle (1,1)\rangle$ and $b=(1,3)\in \Z^2$. The orbit $b+\Gamma=\{(m,m+2)\mid m\in \Z\}$ and the twin prime conjecture is equivalent to the statement that this orbit contains infinitely many points whose components are both prime. Note that in the first case $\Gamma$ is a lattice in $G$, but in the second case $\Gamma$ has infinite index in $\Delta$ and is an analog of a thin group (sans Zariski density) in $G$.

 This orbital perspective was used by Brun to attack the twin primes conjecture using ``combinatorial sieving'' techniques. Although the full conjecture remains unproven these techniques did yield some powerful results. For instance, using these methods, Chen \cite{ChenTwinPrime} was able to prove that there are infinitely many pairs $n$ and $n+2$ such that one is prime and the other is the product of at most 2 primes. More details of this perspective are explained in the excellent surveys of Bourgain \cite{BourgainExpanderSurvey} and Lubotzky \cite{LubotExpanderSurvey}.

 Inspired by these results, Bourgain, Gamburd, and Sarnak \cite{BourGambSarnAffineSeive} a developed complementary ``affine sieving'' techniques to analyze thin group orbits. In this context, the thinness property of the group gives enough control of orbits to execute these counting arguments. Again, much of this is described in Lubotzky's survey \cite{LubotExpanderSurvey}. 

Given these connections it is desirable to produce examples of thin groups and understand what types of groups are thin. Presently, there are many constructions of thin groups. For instance, in recent work of Fuchs and Rivin \cite{FuchsRivinThin} it is shown that if one ``randomly'' selects two matrices in $\SL(n,\Z)$ then with high probability, the group they generate is a thin subgroup of $\SL(n,\R)$. However, the groups constructed in this way are almost always free groups. There are also several constructions that allow one to produce thin subgroups isomorphic to fundamental groups of closed surfaces in a variety of algebraic groups (see \cite{CooperFuterThin,KahnLabourieMozes,KahnMark,KahnWright}, for instance). Given these examples one may ask which isomorphism classes of groups are thin? More precisely, if $G$ is a semisimple Lie group and $H$ is an abstract finitely generated group then we say that $H$ can be \emph{realized as a thin subgroup of $G$} if there is in embedding $\iota:H\to G$ whose image is a thin subgroup of $G$. With this definition in hand we can rephrase the previous question as: given a semisimple algebraic group $G$, what isomorphism types of groups can be realized as thin subgroups of $G$? 
Recent work of the author and D.\ Long \cite{BLThinBend} shows that there are many additional isomorphism types of groups that can arise as thin subgroups of special linear groups. More precisely, in \cite{BLThinBend} it is shown that fundamental groups of arithmetic hyperbolic $n$-manifolds of ``orthogonal type'' can be realized as thin subgroups. In the present work, we extend the techniques of \cite{BLThinBend} to produce infinitely many examples of non-arithmetic hyperbolic $n$-manifolds whose fundamental groups can be realized as thin subgroups of $\SL_{n+1}(\R)$. Our main result is:

\begin{theorem}\label{thm:mainthm}
	For each $n\geq 3$, there is an infinite collection $\mathcal{C}_n$ of non-arithmetic hyperbolic $n$-manifolds with the property that if $M^n\in \mathcal{C}_n$ then $\pi_1(M)$ can be realized as a thin subgroup of $\SL_{n+1}(\R)$.  Furthermore, the collection $\CC_n$ contains representatives from infinitely many commensurability classes of both compact and non-compact manifolds. 
	
	%Let $\Gamma$ be a non-arithmetic Gromov--Piatetski-Shapiro lattice in $\SO(n,1)$ then there then infinitely many lattices $\{\Delta_i\}$ in $\SL_{n+1}(\R)$ and infinitely many embeddings $\sigma_i:\Gamma\to \Delta_i$ so that the image of $\sigma_i$ is a thin subgroup. 
\end{theorem}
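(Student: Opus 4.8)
The plan is to combine the Gromov--Piateski-Shapiro (GPS) construction of non-arithmetic lattices with the ``bending'' technique from \cite{BLThinBend}. Recall that a GPS manifold $M$ is built by gluing two pieces $M_1$ and $M_2$ along a totally geodesic hypersurface $\Sigma$, where $M_1$ and $M_2$ are ``sub-pieces'' of two \emph{non-commensurable} arithmetic hyperbolic $n$-manifolds of simplest type (i.e.\ defined by quadratic forms over a totally real field). The key point is that $\pi_1(M)$ decomposes as an amalgamated product $\pi_1(M_1) *_{\pi_1(\Sigma)} \pi_1(M_2)$, and that $\pi_1(\Sigma)$ stabilizes a hyperplane in $\HH^n$, hence (on the projective/linear side) a codimension-one subspace preserved by a one-parameter ``bending'' subgroup $\{c_t\}$ of $\SL_{n+1}(\R)$ centralizing $\rho(\pi_1(\Sigma))$. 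First I would set up the standard hyperbolic holonomy $\rho\colon \pi_1(M) \to \SO(n,1) \subset \SL_{n+1}(\R)$ coming from the inclusion $\SO(n,1) \hookrightarrow \SL_{n+1}$, and define the bent representation $\rho_t$ by the rule $\rho_t|_{\pi_1(M_1)} = \rho|_{\pi_1(M_1)}$ and $\rho_t|_{\pi_1(M_2)} = c_t \, \rho|_{\pi_1(M_2)} \, c_t^{-1}$; this is well-defined on the amalgam precisely because $c_t$ commutes with $\rho(\pi_1(\Sigma))$.

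The next step is to show that for suitable small $t \neq 0$ the image $\rho_t(\pi_1(M))$ is \emph{thin} in $\SL_{n+1}(\R)$, which requires (i) faithfulness, (ii) Zariski density in $\SL_{n+1}$, and (iii) containment in a lattice of infinite index. For (i), faithfulness for small $t$ follows because bending deformations of a lattice are injective on each factor and one uses a ping-pong/normal-form argument on the amalgam (this is essentially Johnson--Millson style bending, and is already handled in \cite{BLThinBend} for the arithmetic case). For (ii), Zariski density is where the non-arithmeticity of the GPS manifold is exploited exactly as in \cite{BLThinBend}: the Zariski closure of $\rho_t(\pi_1(M))$ is a reductive subgroup of $\SL_{n+1}$ containing two conjugate copies of $\SO(n,1)$ in ``general position'' (because $c_t$ moves one copy off the other), and a classification of subgroups of $\SL_{n+1}$ containing $\SO(n,1)$ forces the closure to be all of $\SL_{n+1}(\R)$ — one must check the bent copies generate something irreducible, using that the bending hyperplane is not preserved.

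The crucial and most delicate step is (iii): producing an actual lattice $\Lambda \subset \SL_{n+1}(\R)$ (or $\SL_{n+1}(\Z)$ after conjugation) that contains $\rho_t(\pi_1(M))$ with infinite index. The strategy, following \cite{BLThinBend}, is an \emph{integrality} argument: one chooses the arithmetic data defining $M_1$ and $M_2$ and the bending parameter $t$ so that $\rho_t(\pi_1(M))$ is conjugate into $\SL_{n+1}(\Z)$ — this needs $t$ to be chosen in a suitable number field and the two quadratic forms to be compatible over a common ring of integers, so that the bending cocycle takes integral (or $S$-integral) values. Once $\rho_t(\pi_1(M)) \subset \SL_{n+1}(\Z)$ and is Zariski dense, it is automatically of infinite index in $\SL_{n+1}(\Z)$ unless it were itself a lattice; but $\rho_t(\pi_1(M))$ is isomorphic to a rank-one lattice $\pi_1(M)$, hence cannot be a lattice in the higher-rank group $\SL_{n+1}(\R)$ (by Margulis, or simply because its cohomological dimension / growth is wrong), so infinite index is forced. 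The main obstacle is arranging the integrality of the bending simultaneously with non-arithmeticity: the GPS construction requires the two arithmetic pieces to be \emph{non-commensurable} (defined by quadratic forms with different invariants), yet to bend integrally one needs them to share enough common structure — resolving this tension, and thereby exhibiting the infinite family $\CC_n$ across commensurability classes, is the technical heart of the argument. Finally, to get both compact and non-compact examples one takes the quadratic forms to be anisotropic (resp.\ isotropic) over $\Q$, and to get infinitely many commensurability classes one varies the field and the forms, invoking the classification of arithmetic commensurability classes to see that infinitely many distinct ones survive the construction.
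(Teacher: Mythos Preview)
Your overall architecture is right, but the step you yourself flag as ``the technical heart'' is precisely where the proposal has a genuine gap. You suggest arranging that $\rho_t(\pi_1(M))$ lands in $\SL_{n+1}(\Z)$ (or an $S$-integral group) after conjugation. This does not work: the GPS lattice $\Delta$ has entries in $\O_L$ for a totally real field $L=K(\sqrt{\beta/\alpha})$, and there is no reason a bent version is $\GL_{n+1}(\overline{\Q})$-conjugate into $\SL_{n+1}(\Z)$. The paper's device is different and specific: one builds a quadratic extension $M=L(\sqrt{u^2-4})$ where $u\in\O_L^\times$ is a unit with $|u|>2$ but $|\sigma_i(u)|<1$ at all other real places, and then uses the \emph{special unitary group} $\SU(J_1,\tau,\O_M)$ of the Hermitian extension of $J_1$ relative to the nontrivial Galois automorphism $\tau$ of $M/L$. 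This unitary group is an arithmetic lattice in $\SL_{n+1}(\R)$ (not $\SO(n,1)$), it visibly contains $\SO(J_1,\O_L)$, and the bending element $c_{\log u}=\Diag(u^{-n},u,\ldots,u)$ lies in it because $\tau(u)=u^{-1}$. Without this unitary-lattice idea your integrality step is not an argument but a hope.

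A few smaller discrepancies are worth noting. First, in the paper the hypersurface $\Sigma$ is arranged to be \emph{non-separating} in $N$, so $\Delta$ is an HNN extension $\hat\Delta\ast_s$ and the bending replaces the stable letter $s$ by $c_ts$; your amalgamated-product bending is the other case and would need $\Sigma$ separating. Second, faithfulness in the paper is not obtained by ping-pong for small $t$ but for \emph{all} $t$ via properly convex geometry (Benoist in the compact case, Marquis in the cusped case), which also handles the cusped examples cleanly. Third, your claim that non-arithmeticity is ``where Zariski density is exploited'' is backwards: Zariski density follows from \cite[Prop.~4.1]{BLThinBend} for any nontrivial bend and uses nothing about arithmeticity; non-arithmeticity is only the conclusion about $N$ itself. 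Finally, for infinite index the paper invokes Mostow rigidity (a lattice in $\SO(n,1)^\circ$ cannot simultaneously be a lattice in $\SL_{n+1}(\R)$), which is essentially your rank-one versus higher-rank observation stated differently.
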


It should be noted that the collection $\CC_n$ appearing in Theorem \ref{thm:mainthm} can be described fairly explicitly, and roughly speaking consists of the hyperbolic manifolds coming from the non-arithmetic lattices in $\SO(n,1,\R)$ constructed by Gromov--Piatetski-Shapiro in \cite{GPS}. 

\subsection*{Outline of paper}
In Section \ref{sec:GPS} we recall the Gromov--Piatetski-Shapiro construction of non-arithmetic lattices in $\SO(n,1,\R)$ and define the collection $\CC_n$ appearing in Theorem \ref{thm:mainthm}. In Section \ref{sec:lattices} we show that the fundamental group of any element of $\CC_n$ can be embedded in several lattices in $\SL_{n+1}(\R)$. Finally, in Section \ref{sec:thinness} we prove Theorem \ref{thm:mainthm} by showing that the images of the previously mentioned embeddings are thin subgroups. 

\subsection*{Acknowledgements} The author would like to thank Darren Long for several helpful conversations during the preparation of this work and Matt Stover for providing references that greatly simplified the proof of Lemma \ref{lem:adj_tr_field}. The author was partially supported by the NSF grant DMS-1709097. 

%\begin{theorem}
%	Let $\Gamma$ be a non-arithmetic Gromov--Piatetski-Shapiro lattice in $\SO(n,1)$ then there then infinitely many lattices $\{\Delta_i\}$ in $\SL_{n+1}(\R)$ and infinitely many embeddings $\sigma_i:\Gamma\to \Delta_i$ so that the image of $\sigma_i$ is a thin subgroup. 
%\end{theorem}

\section{Gromov--Piatetski-Shapiro lattices}\label{sec:GPS}

In their 1987 paper \cite{GPS}, Gromov--Piatetski-Shapiro describe a method for constructing infinitely many non-arithmetic lattices in $\SO(n,1,\R)$. In this section we describe their construction and the construction of the lattices appearing in Theorem \ref{thm:mainthm}. 

Let $K$ be a totally real number field of degree $d+1$ with ring of integers $\O_K$. There are $d+1$ embeddings $\{\sigma_0,\ldots,\sigma_{d}\}$ of $K$ into $\R$. Using the embedding $\sigma_0$ we will implicitly regard $K$ as a subset of $\R$. In this way, it makes sense to say that elements of $F$ are positive (resp.\ negative). Let $s_K:K^\times\to \Z_{\geq 0}$  where $s_K(a)=\abs{\{i\geq 1\mid \sigma_i(a)>0\}}$. In other words, $s_K(a)$ counts the non-identity embeddings for which $a$ has positive image.     

Next, let $\alpha,\beta,a_2\ldots a_{n+1}\in \O_K$ be positive elements such that %none of $a_1$, $a_1'$ or
\begin{itemize}
	\item $\beta/\alpha$ is not a square in $K$
	\item $s_K(\alpha)=s_K(\beta)=s_K(\alpha_i)=d$ for $1\leq i\leq n$
	\item $s_K(a_{n+1})=0$
\end{itemize}
  Next, define quadratic forms 
\begin{align}\label{eq:quad_forms}
	J_1=\alpha x_1^2+\sum_{i=2}^{n}a_ix_i^2-a_{n+1}x_{n+1}^2\\
	\nonumber
	J_2=\beta x_1^2+\sum_{i=2}^{n}a_ix_i^2-a_{n+1}x_{n+1}^2
\end{align}

If $A\subset \R$ is a subring containing 1 then we define 
$$\SO(J_i,A)=\{B\in \SL_{n+1}(A)\mid J_i(Bv)=J_i(v)\ \forall v\in \R^{n+1}\}.$$
 \noindent Using this notation define $\Gamma_1=\SO(J_1,\O_K)$ and $\Gamma_2=h\SO(J_2,\O_K)h^{-1}$, where $h=\Diag(\sqrt{\beta/\alpha},\ldots,1)$. Note that both $\Gamma_1$ and $\Gamma_2$ are lattices in $\SO(J_1,\R)$, however, since $\beta/\alpha$ is not a square in $K$ it follows from \cite{GPS} (see Cor 2.7 and \S 2.9) that these lattices are not commensurable. 
 
 There is a model for hyperbolic $n$-space given by 
 $$\HH^n=\{v\in \R^{n+1}\mid J_1(v)=-1,\ v_{n+1}>0 \}.$$ The identity component $\SO(J_1,\R)^\circ$ of $\SO(J_1,\R)$ consists of the orientation preserving isometries of $\HH^n$ (see \cite[\S 3.2]{Rat} for details). By passing to finite index subgroups we can assume that $\Gamma_i\subset \SO(J_1,\R)^\circ$, and so $\HH^n/\Gamma_i$ is a finite volume hyperbolic orbifold for $i=1,2$. 
  
  The lattice $\Gamma_2\subset \SO(J_1,L)$, where $L=K(\sqrt{\beta/\alpha})$. Note that because $\alpha$ and $\beta$ are positive and $s_K(\alpha)=s_K(\beta)=d$ it follows that $L$ is also totally real. Furthermore, for every $\gamma\in \Gamma_2$, $\tr(\gamma)\in \O_K\subset \O_L$. The following lemma then shows that by passing to a subgroup of finite index we may assume that $\Gamma_2\subset \SO(J_1,\O_L)$. This result seems well known to experts, but we include a proof for the sake of completeness.
  \begin{lemma}\label{lem:int_trace}
  	Let $k\subset \C$ be a number field and let $\O_k$ be the ring of integers of $k$. If $\Gamma\subset \GL_{n}(k)$ acts irreducibly on $\C^n$ and has the property that $\tr(\gamma)\in \O_k$ for each $\gamma\in \Gamma$ then there is a finite index subgroup $\Gamma'\subset \Gamma$ such that $\Gamma'\subset \GL_n(\O_k)$. 
  \end{lemma}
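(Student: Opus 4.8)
The plan is to realize $\Gamma$ as a group of automorphisms of some full $\O_k$-lattice in $k^n$, and then to cut down to the finite-index subgroup preserving the \emph{standard} lattice $\O_k^n$ (so the conclusion should read $\Gamma'\subset\GL_n(\O_k)$). For the first part I would proceed as follows. Since $\Gamma$ acts irreducibly on $\C^n$, Burnside's theorem gives that the $\C$-linear span of $\Gamma$ in $M_n(\C)$ is all of $M_n(\C)$; because the entries of every element of $\Gamma$ lie in $k$, one can pick $\gamma_1,\dots,\gamma_{n^2}\in\Gamma$ forming simultaneously a $\C$-basis of $M_n(\C)$ and a $k$-basis of $M_n(k)$. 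Let $M$ be the $\O_k$-submodule of $M_n(k)$ spanned by all of $\Gamma$. As $\Gamma$ is closed under multiplication, $M$ is a subring containing $1$ and a $k$-basis of $M_n(k)$, so it will be an $\O_k$-order in $M_n(k)$ once we check that it is finitely generated as an $\O_k$-module.

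Finite generation is where irreducibility is used a second time. Consider the non-degenerate symmetric pairing $(x,y)\mapsto\tr(xy)$ on $M_n(k)$ and let $M_0^\ast$ be the $\O_k$-dual of $M_0:=\sum_i\O_k\gamma_i$ with respect to it, which is again a free $\O_k$-module of rank $n^2$ (spanned by the dual basis of $\{\gamma_i\}$). For any $\gamma\in\Gamma$ and any $i$ the product $\gamma\gamma_i$ again lies in $\Gamma$, hence $\tr(\gamma\gamma_i)\in\O_k$; therefore $\Gamma\subseteq M_0^\ast$, and so $M_0\subseteq M\subseteq M_0^\ast$. Since $\O_k$ is Noetherian, $M$ is finitely generated. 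Now set $L:=M\cdot\O_k^n\subseteq k^n$: it is a finitely generated $\O_k$-module, it contains $\O_k^n$ (because $1\in M$) and hence spans $k^n$, and $ML=MM\O_k^n=M\O_k^n=L$, so $L$ is $M$-stable. In particular every $\gamma\in\Gamma\subseteq M$ has $\gamma L\subseteq L$, and applying this also to $\gamma^{-1}\in\Gamma$ gives $\gamma L=L$.

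It remains to move from $L$ to $L_0:=\O_k^n$. These are commensurable full $\O_k$-lattices, so there is $a\in\Z_{>0}$ with $aL\subseteq L_0$ and $aL_0\subseteq L$; combining this with the facts $\gamma L=L$ and $\gamma^{-1}L=L$ gives $a^2L_0\subseteq\gamma L_0\subseteq a^{-2}L_0$ for every $\gamma\in\Gamma$. There are only finitely many $\O_k$-submodules of $k^n$ lying between $a^2L_0$ and $a^{-2}L_0$, since the quotient $a^{-2}L_0/a^2L_0$ is a finite $\O_k$-module; hence the $\Gamma$-orbit of $L_0$ is finite and, by orbit--stabilizer, $\Gamma':=\{\gamma\in\Gamma:\gamma L_0=L_0\}$ has finite index in $\Gamma$. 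Since $\Gamma'$ preserves $\O_k^n$ we get $\Gamma'\subseteq\GL_n(\O_k)$, which is the assertion of the lemma.

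I expect the only real difficulty to be producing the finitely generated order $M$ in the first two steps: this is exactly the place where the hypotheses of irreducibility and of integral traces are both needed, via Burnside together with the trace-form duality estimate, and without irreducibility the $\O_k$-span of $\Gamma$ can fail to be an order. Once $\Gamma$ is known to preserve one full lattice, the last step is the standard soft argument that a finite-index subgroup will preserve any other prescribed full lattice, using only commensurability and finiteness of the orbit.
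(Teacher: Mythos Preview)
Your proof is correct and follows the same overall strategy as the paper: both show that the $\O_k$-span $\O_k\Gamma$ is an order in $M_n(k)$, and then deduce that $\Gamma$ meets $\GL_n(\O_k)$ in a finite-index subgroup. The paper outsources the first step to a result of Bass, whereas you prove it by hand via Burnside plus the trace-form duality bound $M_0\subseteq M\subseteq M_0^\ast$; this is exactly the standard argument behind Bass's proposition, so the content is the same but your version is self-contained. For the second step the paper embeds $\O_k\Gamma$ in a maximal order $\D$ and invokes the commensurability of norm-one groups of orders in $M_n(k)$ (via restriction of scalars), while you instead produce a $\Gamma$-stable full lattice $L=M\!\cdot\!\O_k^n$ and run the finite-orbit argument directly. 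These two endings are really the same fact unpacked at different levels of abstraction: the commensurability of unit groups of orders is typically proved precisely by the lattice/finite-quotient argument you wrote down. So your route is more elementary and avoids the citations, at the cost of a bit more length; the paper's route is terser but leans on background about arithmetic groups.
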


  \begin{proof}
  If $A\subset k$ is a subring then let $A\Gamma=\{\sum_ia_i\gamma_i\mid a_i\in A, \gamma_i\in \Gamma\}$. Note that in this definition all sums have finitely many terms. By \cite[Prop 2.2]{Bass}, $\O_k\Gamma$ is an order in the central simple algebra $k\Gamma$. The order $\O_k\Gamma$ is contained in some maximal order $\D$ in $M_n(k)$ ($n\times n$ matrices over $k$). Let $\D^1\subset \SL_n(k)$ be the norm 1 elements of $\D$. Then $M_n(\O_k)$ is also an order in $M_n(k)$ whose group of norm 1 elements is $\SL_n(\O_k)$. It is a standard result using restriction of scalars that groups of norm 1 elements in maximal orders of $M_n(k)$ are commensurable. Roughly speaking this is a consequence of the fact that the intersection of two orders is again an order and the unit groups of these orders are irreducible lattices in $\SL_n(\R)\times \SL_n(\R)$ (see \cite[\S 5.1 and Ex 5.1 \#7]{WitteMorris}). It follows that $\D^1\cap \SL_n(\O_k)$ has finite index in $\D^1$ and so $\Gamma\cap \SL_n(\O_k)$ has finite index in $\Gamma$. 
  \end{proof}
  
  \noindent Note that since $\Gamma_2$ is a lattice in $\SO(J_1,\O_K)$ it acts irreducibly on $\C^{n+1}$, and so by applying Lemma \ref{lem:int_trace} we may assume that $\Gamma_2\subset \SO(J_1,\O_L)$.

  Denote by $\SO(n-1,1,\R)$ the subgroup of $\SO(J_1,\R)$ that preserves both complementary components in $\R^{n+1}$ of the hyperplane $P$ given by the equation $x_1=0$. The intersection $P\cap \HH^n$ is a model for hyperbolic $(n-1)$-space, $\HH^{n-1}$ and the group $\SO(n-1,1,\R)$ can be identified with the subgroup of orientation preserving isometries of $\HH^{n-1}$. Next, let $\hat\Gamma=\Gamma_1\cap\Gamma_2\cap \SO(n-1,1,\R)$. Since each $\Gamma_i\cap \SO(n-1,1,\R)$ is sublattice of the lattice $\SO(n-1,1, \O_L)$ in $\SO(n-1,1,\R)$, it follows that $\hat \Gamma$ is also a lattice in $\SO(n-1,1,\R)$. It follows that so $\HH^{n-1}/\hat\Gamma$ is a hyperbolic $(n-1)$ orbifold. By passing to finite index subgroups we may arrange the following properties:

  \begin{enumerate}[1.]
  	\item $\Gamma_i$ is torsion-free and contained in the identity component of $\SO(J_1,\R)$. This component is isomorphic to $\Isom^+(\HH^n)$, and so $M_i:=\HH^n/\Gamma_i$ is a finite volume hyperbolic manifold (apply Selberg's Lemma and the fact that $\SO(J,\R)^\circ$ has finite index in $\SO(J,\R)$).
  	
  	\item Since $\Sigma=\HH^{n-1}/\hat\Gamma$ is a totally geodesic we may assume that $\Sigma$ is a hyperbolic $(n-1)$-manifold and this manifold is embedded in both $M_1$ and $M_2$ (see \cite[Thm 1]{Bergeron}).
  	\item If $M_i$ is non-compact then all cusps of $M_i$ are diffeomorphic to an $(n-1)$-torus times an interval (apply \cite[Thm 3.1]{MRSCollisions})
  	\item The complement $\hat M_i=M_i\bs \Sigma$ is connected for $i=1,2$ (see \cite[Thm 2]{Bergeron}). 
  \end{enumerate}

  The manifold $\hat M_i$ is a convex submanifold of $M_i$ and so $\hat M_i=V_i/\hat\Gamma_i$, where $V_i$ is a component of the preimage of $\hat M_i$ in $\HH^n$ under the universal covering projection $\HH^n\to \HH^n/\Gamma_i=M_i$, and $\hat \Gamma_i$ is a subgroup of $\Gamma_i$ that stabilizes $V_i$. The manifold $\hat M_i$ is a hyperbolic manifold with totally geodesic boundary equal to two isometric copies of $\Sigma$, and so it is possible to glue $\hat M_1$ and $\hat M_1$ along $\Sigma$ to form the finite volume hyperbolic manifold $N$ (see \cite[\S 6.5]{WitteMorris} for details). The manifold $N$ can be realized as $\HH^n/\Delta$ where, after appropriately conjugating $\hat \Gamma_i$ in $\Gamma_i$, we may assume that 
  
  \begin{align}\label{eq:Delta_Presentation}
  	\Delta=\langle \hat\Gamma_1,\hat\Gamma_2,s\rangle. 
  \end{align}
Here $s$ comes from a ``graph of spaces'' description of $N$ and can thus be written as a product $s=s_2s_1$, where $s_i$ is the isometry corresponding to an appropriate lift to $V_i$ a curve in $M_i$ whose algebraic intersection with $\Sigma$ is 1 (See Figure \ref{fig:HNN}). In \cite[\S 2.9]{GPS} it is shown that $\Delta$ is a non-arithmetic lattice in $\SO(J_1,\R)$. If $N=\HH^n/\Delta$ then we call $N$ an \emph{interbreeding of $M_1$ and $M_2$}.  %Note, such an $s$ is not unique and given one such $s$ another can be obtained by left multiplying $s$ by an element of $\hat \Gamma$.  

Since $\Gamma_1,\Gamma_2\subset \SO(J_1,\O_L)$ it follows that $\Delta\subset \SO(J_1,\O_L)$. As a result, we call the field $L$ the \emph{field of definition of $\Delta$}. Let $\CC_n$ be the collection of hyperbolic $n$-manifolds coming from the above interbreeding construction.
  
  \begin{figure}
  \centering
\def \svgwidth{.38 \textwidth}
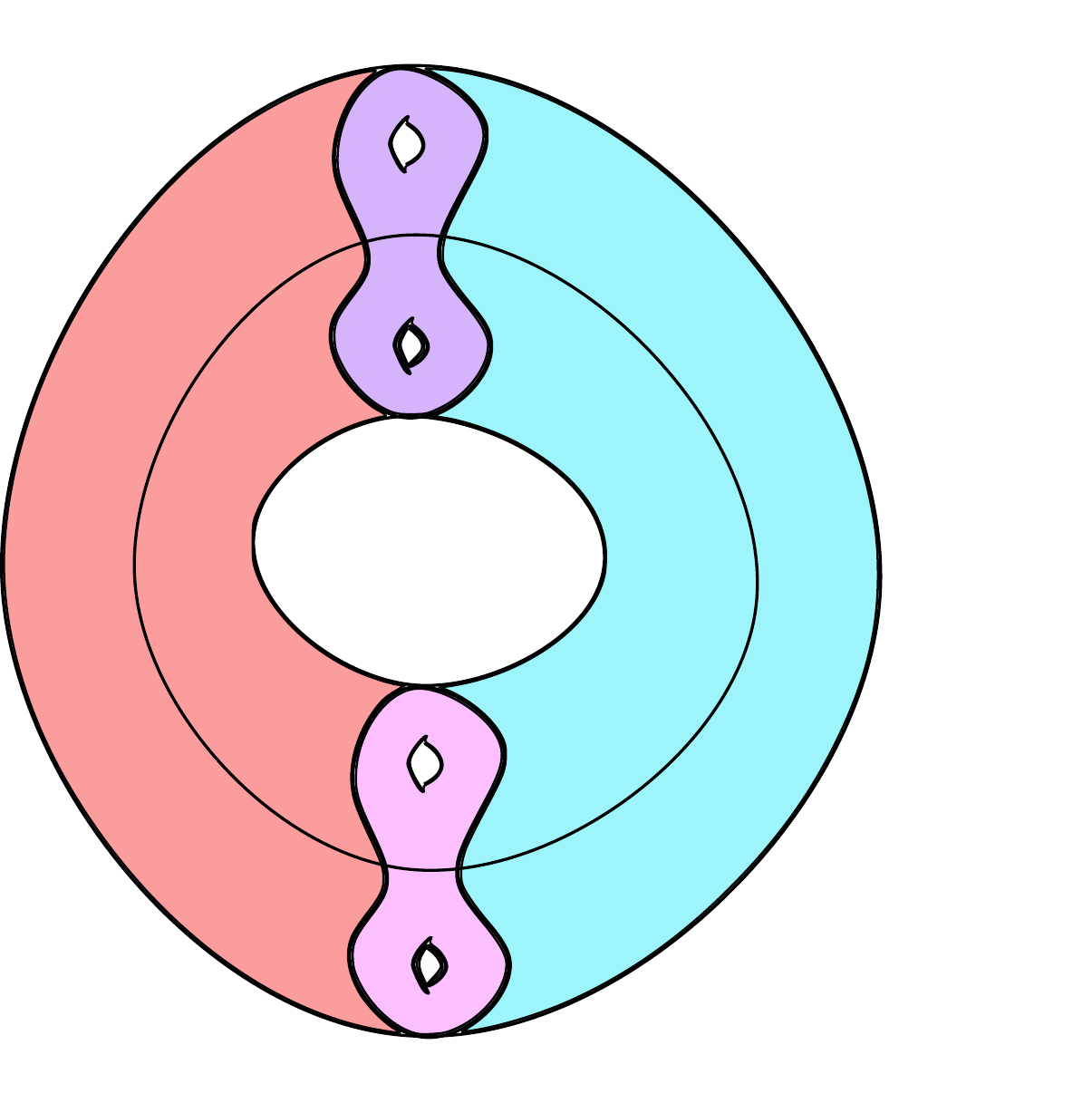
\caption{An graph of spaces description of the manifold $N$.\label{fig:HNN}}	
  \end{figure}

  We close this section by proving the following result:
  
  \begin{proposition}\label{prop:inf_comm_classes}
  	The collection $\CC_n$ contains representatives of infinitely many commensurability classes of  both closed and non-compact hyperbolic $n$-manifolds satisfying the properties 1-4 from above.
  \end{proposition}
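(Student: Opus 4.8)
The plan is to produce, for each $n\ge 3$, two infinite families of admissible data $(K;\alpha,\beta,a_2,\dots,a_{n+1})$ — one giving closed manifolds, one giving cusped manifolds — whose interbred manifolds $N$ are pairwise non-commensurable. Properties 1--4 cost nothing, since each is arranged by replacing the ambient lattices by finite-index subgroups and this leaves the commensurability class of $N$ unchanged; so it suffices to construct the two families and to separate their commensurability classes. For the latter I would use the adjoint trace field $k\Delta:=\Q\big(\{\tr(\Ad\gamma):\gamma\in\Delta\}\big)$. As $\Delta$ is non-arithmetic (by \cite{GPS}), Margulis' criterion makes its commensurator a lattice in which $\Delta$ has finite index, and Vinberg's theorem on fields of definition then gives that $k\Delta$ is a commensurability invariant of $N$; moreover $\Delta\subset\SO(J_1,\O_L)$ forces $k\Delta\subseteq L$.

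For the closed family I would take $K$ to be the maximal real subfield of $\Q(\zeta_{2^m})$ and $L$ the maximal real subfield of $\Q(\zeta_{2^{m+1}})$, for $m\ge 3$: these are totally real, $K\subset L$, $[L:K]=2$, $\operatorname{Gal}(L/\Q)$ is cyclic of order $2^{m-1}$, and $[K:\Q]=2^{m-2}\to\infty$. Writing $L=K(\sqrt\delta)$ with $\delta\in\O_K$ (necessarily totally positive, since $L$ is totally real), I set $\alpha=a_2=\dots=a_n=1$, $\beta=\delta$, and choose $a_{n+1}\in\O_K$ positive at $\sigma_0$ and negative at every other real place (possible by weak approximation). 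This tuple is admissible, $\beta/\alpha=\delta\notin(K^\times)^2$, and $L=K(\sqrt{\beta/\alpha})$ is the field of definition of the resulting $\Delta$; as $\alpha,\beta,a_2,\dots,a_n$ are totally positive and $a_{n+1}$ is negative at every $\sigma_i$, $i\ge1$, each $J_i$ is positive definite at each non-identity real place, hence anisotropic over $K$, so $M_1,M_2$ and $N$ are closed. For the cusped family I would fix $K=\Q$, take $\alpha=a_2=\dots=a_{n+1}=1$, and let $\beta$ range over squarefree integers $>1$; then each of $J_1,J_2$ contains a hyperbolic plane, hence is isotropic over $\Q$, so $M_1,M_2,N$ are cusped, and $L=\Q(\sqrt\beta)$ runs over infinitely many distinct quadratic fields.

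The crux is to show $k\Delta=L$ in every case. Suppose $k_0:=k\Delta$ were a proper subfield of $L$. By Vinberg's theorem, $\Ad\Delta$ is conjugate, by some real $g$, into $\operatorname{PO}(Q,k_0)$ for a quadratic form $Q$ over $k_0$ of signature $(n,1)$. Restricting this conjugacy to the Zariski-dense subgroups $\hat\Gamma_1\subset\SO(J_1,K)$ and $h^{-1}\hat\Gamma_2 h\subset\SO(J_2,K)$ (with $h=\Diag(\sqrt{\beta/\alpha},1,\dots,1)$ as above) produces isomorphisms of real algebraic groups $\operatorname{PO}(J_1)\to\operatorname{PO}(Q)$ and $\operatorname{PO}(J_2)\to\operatorname{PO}(Q)$, each carrying a Zariski-dense set of $(Kk_0)$-points into $\operatorname{PO}(Q,k_0)$. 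Since an $\R$-isomorphism of algebraic groups defined over a subfield $F\subseteq\R$ that sends a Zariski-dense set of $F$-points into $F$-points descends to $F$, these isomorphisms are defined over $F:=Kk_0$; hence $J_1$ and $J_2$ are similar over $Kk_0$, i.e.\ $\beta/\alpha\in\big((Kk_0)^\times\big)^2$. In the cusped family, $k_0\subsetneq L=\Q(\sqrt\beta)$ forces $k_0=\Q$, so $\beta$ would be a perfect square; in the closed family, $L/\Q$ is cyclic of $2$-power degree, so its subfields are totally ordered with $K$ the unique maximal proper one, whence $k_0\subseteq K$, $Kk_0=K$, and $\beta/\alpha\in(K^\times)^2$. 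Either conclusion contradicts the non-commensurability of $\Gamma_1$ and $\Gamma_2$ recorded above. Therefore $k\Delta=L$ throughout, so the closed family gives $[k\Delta:\Q]=2^{m-2}\to\infty$ and the cusped family gives infinitely many distinct quadratic $k\Delta$'s; in either case $\CC_n$ meets infinitely many commensurability classes, proving the proposition.

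I expect the last paragraph to be the main obstacle: bounding $k\Delta$ from below is exactly where the incommensurability of the two arithmetic building blocks enters, and it leans on Vinberg's theorem, the descent lemma for morphisms, and — for the closed family — the subfield combinatorics of $2$-power cyclotomic fields. A more geometric alternative, closer to \cite{GPS}, would be to show directly that the unordered pair $\{[M_1],[M_2]\}$ of commensurability classes of the building blocks is a commensurability invariant of $N$, by following totally geodesic hypersurfaces through a common finite cover, and then to note that this pair already takes infinitely many values in each family (in the cusped case the forms $J_2$ realize infinitely many Hasse--Witt invariants as $\beta$ runs over primes $\equiv 3 \bmod 4$); but making that precise requires controlling how a non-arithmetic manifold remembers its totally geodesic pieces, which seems no easier.
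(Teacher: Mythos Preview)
Your strategy coincides with the paper's: both separate commensurability classes using Vinberg's adjoint trace field and the identification $k\Delta=L$. The paper, however, obtains $k\Delta=L$ in one line by invoking a theorem of Mila (\cite[Thm~4.7]{MilaThesis}), after noting that $L=K(\sqrt{\beta/\alpha})$ is the smallest extension of $K$ over which $J_1$ and $J_2$ become isometric; it then simply lets $\alpha,\beta$ vary over a fixed $K$ (with $K=\Q$ for the cusped case and $K\neq\Q$ for the closed case, the compactness dichotomy being taken from \cite{GPS}). Your carefully chosen cyclotomic towers and explicit cusped family are thus more than the paper needs, though they are not wrong.

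Your direct proof of $k\Delta=L$ is close to correct but contains a genuine imprecision. From the two $F$-defined isomorphisms $\operatorname{PO}(J_i)\to\operatorname{PO}(Q)$ you deduce that $J_1$ and $J_2$ are \emph{similar} over $F=Kk_0$, and then write ``i.e.\ $\beta/\alpha\in(F^\times)^2$''. That ``i.e.'' is not valid: similarity of forms (equivalently, an $F$-isomorphism of their projective orthogonal groups) only gives $J_1\cong\lambda J_2$ for some $\lambda\in F^\times$, and for suitable $n$ and $\beta$ this can hold without $\beta/\alpha$ being a square. What actually rescues the argument is that your two isomorphisms are conjugation by the \emph{same} element $g$ (resp.\ $g\Ad(h)$), so their composite is precisely $\Ad(h):\operatorname{PO}(J_2)\to\operatorname{PO}(J_1)$; if this specific map is $F$-defined then Galois descent forces $\sigma(h)h^{-1}$ to be scalar for every $\sigma\in\operatorname{Gal}(\bar F/F)$, and since $h=\Diag(\sqrt{\beta/\alpha},1,\dots,1)$ this yields $\sqrt{\beta/\alpha}\in F$. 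You should also note that the assertion ``the $k_0$-form of the adjoint group is $\operatorname{PO}(Q)$ for some quadratic form $Q$'' is not automatic for all $n$ (e.g.\ $n=3$, where $\operatorname{PO}(3,1)\cong\operatorname{PGL}_2(\C)$ admits non-orthogonal forms, and $n=7$, where triality enters); Mila's theorem, as cited by the paper, sidesteps this.
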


   To prove this we will need the following invariant, originally due to Vinberg \cite{VinbergFieldofDef}. Let $\Gamma$ be a Zariski dense subgroup of a Lie group $H$ with Lie algebra $\mathfrak{h}$. The adjoint action of $\Gamma$ on $\mathfrak{h}$ gives a representation $\Ad:\Gamma\to \mathfrak{gl(h)}$. In \cite{VinbergFieldofDef} it is shown that the field $\Q(\{\tr(\Ad(\gamma))\mid \gamma\in \Gamma\})$ is an invariant of the commensurability class of $\Gamma$ in $H$. This field is called the \emph{adjoint trace field of $\Gamma$}.   
  
  Next, let $N=\HH^n/\Delta\in \CC_n$, then $\Delta$ is a lattice in $\SO(J_1,\R)$, which is Zariski dense by the Borel density theorem. The following lemma allows us to compute the adjoint trace field of $\Delta$. It is an immediate corollary of a theorem of Mila (see \cite[Thm 4.7]{MilaThesis}) once it is observed that $L$ is the smallest extension of $K$ over which the forms $J_1$ and $J_2$ are isometric. 
  
  \begin{lemma}\label{lem:adj_tr_field}
  	Let $N=\HH^n/\Delta\in \CC_n$ and let $L$ be the field of definition of $\Delta$, then $L$ is the adjoint trace field of $\Delta$. 
  \end{lemma}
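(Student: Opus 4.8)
The adjoint trace field in question is $\Q(\{\tr(\Ad(\gamma)) : \gamma\in\Delta\})$, and one inclusion is already essentially in hand: since $\Delta\subseteq\SO(J_1,\O_L)$ and the algebraic group $\SO(J_1)$ together with its adjoint representation on $\so(J_1)$ is defined over $K\subseteq L$, each $\tr(\Ad(\gamma))$ lies in $L$, so the adjoint trace field is contained in $L$. The plan for the reverse inclusion is to apply Mila's computation of the adjoint trace field of an interbreeding \cite[Thm 4.7]{MilaThesis}; that theorem expresses the answer as the smallest field extension of the common field of definition $K$ over which the two defining quadratic forms become isometric, so the lemma will follow once that minimal field is identified with $L$.

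First I would record the quadratic-form observation. By construction $J_1$ and $J_2$ differ only in the coefficient of $x_1^2$, so writing $q=\langle a_2,\ldots,a_n,-a_{n+1}\rangle$ we have isometries $J_1\cong\langle\alpha\rangle\perp q$ and $J_2\cong\langle\beta\rangle\perp q$ of forms over $K$. For any field extension $F/K$ (automatically of characteristic $0$), Witt cancellation applied to the nondegenerate form $q$ shows that $J_1$ and $J_2$ are isometric over $F$ if and only if $\langle\alpha\rangle\cong_F\langle\beta\rangle$, i.e.\ if and only if $\beta/\alpha\in(F^\times)^2$. Hence the smallest extension of $K$ over which $J_1$ and $J_2$ become isometric is exactly $K(\sqrt{\beta/\alpha})=L$ --- and since $\beta/\alpha$ is not a square in $K$ this is a genuine quadratic extension, consistent with $\Delta$ being non-arithmetic. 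This is the same elementary computation underlying the choice of $h=\Diag(\sqrt{\beta/\alpha},1,\ldots,1)$: it satisfies $h^{\mathsf T}D_1 h=D_2$ for the Gram matrices $D_i$ of $J_i$, hence $h\,\SO(J_2)\,h^{-1}=\SO(J_1)$, and no subfield of $\R$ smaller than $L$ contains the entries of $h$.

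With this in place, I would invoke \cite[Thm 4.7]{MilaThesis}. The manifold $N=\HH^n/\Delta$ is by definition an interbreeding of the arithmetic hyperbolic manifolds $M_1=\HH^n/\Gamma_1$ and $M_2=\HH^n/\Gamma_2$ arising from the $K$-forms $J_1$ and $J_2$ and glued along a common totally geodesic hypersurface, with $\Delta=\langle\hat\Gamma_1,\hat\Gamma_2,s\rangle$ as in \eqref{eq:Delta_Presentation}; this is precisely the setting of Mila's theorem, which therefore identifies the adjoint trace field of $\Delta$ with the minimal field of isometry of $J_1$ and $J_2$. By the previous paragraph that field is $L$, which gives the lemma. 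I expect the only real point needing attention to be checking that the Gromov--Piateski-Shapiro gluing data used here --- the auxiliary generator $s=s_2 s_1$ and the particular finite-index subgroups $\hat\Gamma_i\subseteq\Gamma_i$ --- meets the hypotheses under which Mila's formula is valid; the passage to finite-index subgroups is harmless because the adjoint trace field is a commensurability invariant, and everything else is a matter of matching our construction to Mila's framework.
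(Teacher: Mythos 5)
Your proposal is correct and follows exactly the route the paper takes: it deduces the lemma from Mila's theorem \cite[Thm 4.7]{MilaThesis} after observing that $L=K(\sqrt{\beta/\alpha})$ is the smallest extension of $K$ over which $J_1$ and $J_2$ become isometric. Your Witt-cancellation argument just fills in the detail of that observation, which the paper leaves implicit.
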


\begin{proof}[Proof of Proposition \ref{prop:inf_comm_classes}]
From \cite{GPS}, it follows that $N=\HH^n/\Delta$ is compact if and only if the field $K$ used to construct $\Delta$ is not equal to $\Q$. For each choice of a totally real field $K$ and a pair $\alpha,\beta\in K$ so that $\frac{\alpha}{\beta}$ is not a square in $K$ we can produce an element $N\in\CC_n$ via the interbreeding construction.  By varying the choices of $\alpha$ and $\beta$ we can produce infinitely many distinct $L=K(\sqrt{\beta/\alpha})$ for each choice of $K$. It follows from Lemma \ref{lem:adj_tr_field} the the corresponding $N$ are representatives of infinitely many commensurability classes of both compact and non-compact hyperbolic $n$-manfiolds. 
\end{proof}

  \section{Lattices in $\SL_{n+1}(\R)$}\label{sec:lattices}
  
  In this section we describe the lattices $\Delta\subset \SL_{n+1}(\R)$ in which our thin groups will ultimately live.  Let $J_1$ be one of the forms constructed in Section \ref{sec:GPS} and let $L$ be the corresponding (totally real) field of definition. Let $M=L(\sqrt{r})$, where $r\in L$ is positive, square-free, and $s_L(r)=0$. The number field $M$ is a quadratic extension of $L$ and we let $\tau:M\to M$ be the unique non-trivial Galois automorphism of $M$ over $L$. In this context, we can extend the quadratic form $J_1$ on $L^{n+1}$ to a ``Hermitian'' form on $M^{n+1}$. Let $N_{M/L}:M\to L$ given by $N_{M/L}(x)=x\tau(x)$ be the norm of the field extension $M/L$. Next let $x=(x_1,\ldots,x_{n+1})\in M^{n+1}$ and define $H_1:M^{n+1}\to L$ as 
   
  $$H_1(x)=\alpha N_{M/L}(x_1)+\sum_{i=1}^na_iN_{M/L}(x_i)-a_{n+1}N_{M/L}(x_{n+1})$$
  
  Note that this defines a Hermitian form in the sense that if $x\in M^{n+1}$ and $\lambda\in M$ then $H_1(\lambda x)=N_{M/L}(\lambda)H_1(x)$. Furthermore, since $L$ is the fixed field of $\tau$ it follows that $H_1$ reduces to $J_1$ when restricted to $L^{n+1}$. 
  
   Next, we can define a unitary analogue of $\SO(J_1,\O_M)$ as 
  $$\SU(J_1,\tau,\O_M)=\{A\in \SL_{n+1}(\O_M)\mid H_1(Av)=H_1(v)\ \forall v\in M^{n+1} \}.$$
  
  %where here $A^\ast=\tau(A^T)$ and $H_1$ is the matrix $\Diag(\alpha,a_1,\ldots,a_{n},-a_{n+1})$. In other words, $A^\ast$ is the conjugate transpose of $A$, where here conjugate refers to \emph{Galois conjugate} rather than complex conjugate. Furthermore, if we let $N_{M/L}(x)=x\tau(x)$ be the norm of the field extension $M/L$ then in this context $H_1$ is the matrix that encodes the ``Hermitian'' form
  %$$\alpha N_{M/L}(x_1)+\sum_{i=1}^na_iN_{M/L}(x_i)-a_{n+1}N_{M/L}(x_{n+1})$$
 %defined on the vector space $M^{n+1}$. In this way the previous construction really is analogous to the standard construction of unitary groups. 
 \noindent It is well known (see \cite[\S 6.8]{WitteMorris},  for example) that $\SU(J_1,\tau,\O_M)$ is an arithmetic lattice in $\SL_{n+1}(\R)$.

 Let $N=\HH^n/\Delta$ be one of the manifolds from $\CC_n$. By construction, the manifold $N$ contains the embedded totally geodesic hypersurface $\Sigma=\HH^{n-1}/\hat \Gamma$, and so it is possible to deform $\Delta$ inside of $\SL_{n+1}(\R)$ using the bending construction of Johnson and Millson (see \cite{JohnMill}).
 
 Specifically, let $c_t=\Diag(e^{-nt},e^t,\ldots,e^t)\in \SL_{n+1}(\R)$. It is easy to check that $c_t$ centralizes $\SO(n-1,1,\R)$. Since $\Sigma$ is assumed to be non-separating, we see that write $\Delta$ as an HNN extension $\Delta\cong \hat\Delta\ast_s$, where $\hat\Delta$ is isomorphic to the fundamental group of $N\backslash \Sigma$ and $s$ is a free letter. In this context, we may view $\hat\Delta\subset \SO(J_1,\O_L)$ and $s\in \SO(J_1,\O_L)$ and observe that as a subgroup of $\SO(J_1,\O_L)$ we can write $\Delta=\langle \hat \Delta,s\rangle$. We now define a new family of subgroups $\Delta_t=\langle \Delta,c_ts\rangle\subset \SL_{n+1}(\R)$. Using basic theory of HNN extensions, it is easy to see that, since $c_t$ centralizes the fundamental group of $\Sigma$, as an abstract group $\Delta_t$ is a quotient of $\Delta$. However, by using the following result due to Benoist  in the compact case \cite{BenoistCDIII} and Marquis in the non-compact case \cite{MarquisBending}, we can actually say much more

 \begin{proposition}\label{prop:allisom}
 	For each $t$, the group $\Delta_t$ is isomorphic to $\Delta$. 
 \end{proposition}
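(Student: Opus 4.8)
The plan is to realize each $\rho_t$ --- the bending homomorphism determined by $\rho_t|_{\hat\Delta}=\mathrm{id}$ and $\rho_t(s)=c_ts$ --- as the holonomy of a properly convex projective structure on $N$, and then to read off injectivity of $\rho_t$ from convexity. As noted just before the statement, since $c_t$ centralizes the copy of $\hat\Gamma$ serving as the edge group, the defining relations of the HNN extension $\Delta\cong\hat\Delta\ast_s$ are preserved by $\rho_t$, so $\rho_t$ is a well-defined homomorphism and $\Delta_t=\rho_t(\Delta)$ is a quotient of $\Delta$; hence it suffices to show that $\rho_t$ is injective for every $t$.

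First I would recall, following Johnson--Millson \cite{JohnMill}, that in $\PGL_{n+1}(\R)$ the element $c_t$ acts on $\RP^n$ as the projective transformation fixing the hyperplane $\{x_1=0\}$ pointwise and fixing the point $[1:0:\cdots:0]$; since this hyperplane meets $\HH^n$ exactly in the totally geodesic lift of $\Sigma$, bending the hyperbolic structure on $N$ along $\Sigma$ by the one-parameter family $c_t$ yields a continuous path $N_t$ of $(\PGL_{n+1}(\R),\RP^n)$-structures on the fixed manifold $N$, with holonomy $\bar\rho_t\colon\pi_1(N)=\Delta\to\PGL_{n+1}(\R)$ the reduction of $\rho_t$, and with $N_0$ the hyperbolic structure written in the Klein model. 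The latter is properly convex (the domain $\HH^n$ is strictly convex, and in the non-compact case its ends have the torus-times-interval form of property 3, which places us in Marquis's setting of convex projective manifolds with cusps). Let $T\subseteq\R$ be the set of $t$ for which $N_t$ is properly convex. Then $0\in T$; $T$ is open, by Koszul's theorem on the openness of convexity under deformation of projective structures (finite-volume version as in \cite{MarquisBending}); and $T$ is closed, by Benoist \cite{BenoistCDIII} in the compact case and Marquis \cite{MarquisBending} in the finite-volume case, which guarantee that a limit of convex projective holonomies of this type is again such a holonomy. Since $\R$ is connected, $T=\R$.

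It then follows that for every $t$ the developing map of $N_t$ is a diffeomorphism of $\widetilde N=\HH^n$ onto a properly convex domain $\Omega_t\subset\RP^n$, equivariant for the deck action of $\Delta$ and the holonomy $\bar\rho_t$; thus the action of $\Delta$ on $\Omega_t$ via $\bar\rho_t$ is conjugate, through the developing map, to the deck action on $\widetilde N$, which is free. Hence $\bar\rho_t$ is faithful, so $\rho_t$ is faithful (its kernel is contained in that of $\bar\rho_t$), and therefore $\Delta_t=\rho_t(\Delta)\cong\Delta$. The one substantive step --- and the only place where the assumptions on $\CC_n$ and the deep machinery genuinely enter --- is the closedness of $T$: one must exclude any degeneration of the convex domain, and, in the cusped case, of the cusp cross-sections, as $t\to\pm\infty$, which is precisely the content of the cited results of Benoist and Marquis.
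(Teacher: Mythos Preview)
Your proposal is correct and follows exactly the approach the paper indicates: the paper does not give a proof block for this proposition but simply attributes it to Benoist \cite{BenoistCDIII} in the compact case and Marquis \cite{MarquisBending} in the non-compact case, and your argument is a faithful unpacking of how those convex-projective results yield injectivity of the bending homomorphism via the open--closed argument on the parameter line.
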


 Next, we show for certain values of $t$  the group $\Delta_t$ is contained in one of the unitary lattices constructed above. Specifically, if $N=\HH^n/\Delta$ is contained in $\CC_n$, let $J_1$ and $L$ be such that $\Delta\subset \SO(J_1,\O_L)$. Recall that the field $L$ is totally real of degree $d+1$ over $\Q$ and so there are $d+1$ embeddings $\{\sigma_0=Id,\ldots,\sigma_d\}$ of $L$ into $\R$.  We can use Lemma 3.1 of \cite{BLThinBend} to produce a unit $u\in \O_L^\times$ with the property that $\abs{u}>2$  and $0<\abs{\sigma_i(u)}<1$ for $1\leq i\leq d$. Let $p(x)=x^2-ux+1$ and let $M=L(v)$, where $v$ is one of the roots of $p(x)$. It is easy to check that the discriminant of $p(x)$ is $u^2-4$ and so $M=L(\sqrt{u^2-4})$. By construction $s_L(u^2-4)=0$, and so $\SU(J_1,\tau,\O_M)$ is an arithmetic lattice in $\SL_{n+1}(\R)$, where $\tau:M\to M$ is the non-trivial Galois automorphism of $M$ over $L$. The next lemma says that by carefully choosing $t$, we can arrange that $\Delta_t\subset \SU(J_1,\tau,\O_M)$. 
 
 \begin{lemma}\label{lem:deltunitary}
 	Let $u$ be as above, then if $t=\log(u)$ then $\Delta_t\subset \SU(J_1,\tau,\O_M)$.
 \end{lemma}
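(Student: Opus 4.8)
\subsection*{Proof proposal for Lemma \ref{lem:deltunitary}}

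The plan is to show directly that all the generators of $\Delta_t$ lie in $\SU(J_1,\tau,\O_M)$, the latter being realized inside $\SL_{n+1}(\R)$ through the archimedean place $\sigma_0$ of $L$ at which it is non-compact (cf.\ \cite[\S 6.8]{WitteMorris}). Recall that $\Delta_t$ is generated by $\hat\Delta\subset\SO(J_1,\O_L)$ together with the bent stable letter $c_ts$, where $s\in\SO(J_1,\O_L)$ and $c_t=\Diag(e^{-nt},e^t,\dots,e^t)$. Since $\SU(J_1,\tau,\O_M)$ is a group, it is enough to prove two things: first, that $\SO(J_1,\O_L)\subset\SU(J_1,\tau,\O_M)$, which takes care of $\hat\Delta$ and of $s$; and second, that for the prescribed value of $t$ the matrix $c_t$ lies in $\SU(J_1,\tau,\O_M)$.

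The first inclusion is formal. Let $D=\Diag(\alpha,a_2,\dots,a_n,-a_{n+1})$, which is the Gram matrix of $J_1$ and also of $H_1$ in the standard basis of $M^{n+1}$. If $\gamma\in\SL_{n+1}(\O_L)$ then $\tau(\gamma)=\gamma$, because $\O_L\subset\O_M$ is fixed pointwise by $\tau$; hence the equation $\gamma^{T}D\gamma=D$ defining $\SO(J_1,\O_L)$ is literally the equation $\tau(\gamma)^{T}D\gamma=D$ defining $\SU(J_1,\tau,\O_M)$, and under the $\sigma_0$-realization such a $\gamma$ is represented by its own matrix (both real embeddings of $M$ over $\sigma_0$ restrict to $\sigma_0$ on $L$). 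Thus $\hat\Delta,s\in\SU(J_1,\tau,\O_M)$.

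The substantive step is to realize $c_t$ inside the unitary lattice, and here I would use the explicit description of $M$. Since $v$ is a root of $p(x)=x^2-ux+1$, its Galois conjugate over $L$ is the other root, so $\tau(v)=v^{-1}$ and $N_{M/L}(v)=v\tau(v)=1$; in particular $v^{-1}=\tau(v)\in\O_M$, so $v$ is a unit of $\O_M$ of norm one. Consider $\hat c:=\Diag(v^{-n},v,\dots,v)\in\SL_{n+1}(\O_M)$, which has determinant $v^{-n}v^{n}=1$. Because $\hat c$ and $D$ are diagonal,
\[
\tau(\hat c)^{T}D\,\hat c=\Diag\!\big(N_{M/L}(v)^{-n}\alpha,\ N_{M/L}(v)\,a_2,\ \dots,\ -N_{M/L}(v)\,a_{n+1}\big)=D,
\]
so $\hat c\in\SU(J_1,\tau,\O_M)$. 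Now choose the real embedding $\iota:M\to\R$ extending $\sigma_0$ with $\iota(v)=\tfrac12\big(u+\sqrt{u^{2}-4}\big)>1$ (after replacing $u$ by $-u$ if necessary we may assume $u>2$). Then $\iota(\hat c)=\Diag\big(\iota(v)^{-n},\iota(v),\dots,\iota(v)\big)$, which equals $c_t$ when $e^{t}=\iota(v)$ --- equivalently, for the $t$ with $e^{t}+e^{-t}=u$. Hence for this distinguished value of $t$, the bending matrix $c_t$ is the $\sigma_0$-realization of $\hat c\in\SU(J_1,\tau,\O_M)$.

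Combining the two steps, $c_ts=\iota(\hat c)\,\iota(s)=\iota(\hat c\,s)$ with $\hat c\,s\in\SU(J_1,\tau,\O_M)$, and $\hat\Delta\subset\SO(J_1,\O_L)\subset\SU(J_1,\tau,\O_M)$, so $\Delta_t=\langle\hat\Delta,c_ts\rangle\subset\SU(J_1,\tau,\O_M)$, as claimed. Everything else is bookkeeping; the one point requiring genuine thought is the identification in the third step, namely recognizing that the one-parameter group $\{c_t\}$ of diagonal bending matrices meets the arithmetic lattice $\SU(J_1,\tau,\O_M)$ precisely along the $\iota$-images of the integer powers of the norm-one diagonal unit $\hat c$, which is what forces the distinguished bending parameter to be the logarithm of the appropriate real conjugate of the algebraic unit $v$ cut out by $p$.
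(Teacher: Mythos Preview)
Your proof is correct and follows the same strategy as the paper's: reduce to showing that the bending element $c_t$ lies in $\SU(J_1,\tau,\O_M)$, and then verify this via the fact that the diagonal entry has norm one under $\tau$. You are in fact more careful than the paper here: the paper writes $t=\log(u)$ and $c_t=\Diag(u^{-n},u,\dots,u)$, but since $u\in\O_L$ is fixed by $\tau$ the computation only goes through with the root $v$ of $p(x)=x^2-ux+1$ in place of $u$, exactly as you have it (so $e^{t}=\iota(v)$, equivalently $e^{t}+e^{-t}=u$, rather than $e^{t}=u$).
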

 
 This is basically Lemma 3.4 of \cite{BLThinBend}, but the proof is short so we include it here for the sake of completeness.
 
 \begin{proof}
 	Recall from above that there is a subgroup $\hat\Delta\subset \SO(J_1,\O_L)$ and $s\in \SO(J_1,\O_L)$ so that  $\Delta=\langle \hat \Delta, s\rangle$ and $\Delta_t=\langle \hat \Delta,c_t s\rangle$, where $c_t=\Diag(e^{-nt},e^t,\ldots,e^t)\in \SL_{n+1}(\R)$. Since $\SO(J_1,\O_L)\subset \SU(J_1,\tau,\O_N)$ the proof will be complete if we can show that $c_t\in \SU(J_1,\tau,\O_M)$. 
 	
 	If $t=\log(u)$ then $c_t=\Diag(u^{-n},u,\ldots,u)$. Furthermore, since $\tau(u)$ is the other root of $p(x)$ it follows that $u\tau(u)=1$, or in other words $\tau(u)=u^{-1}$. It follows that $c_t^\ast=\Diag(u^n,u^{-1},\ldots u^{-1})$. A simple computation then shows that for each $v\in M^{n+1}$, $H_1(c_tv)=H_1(v)$, and so $c_t\in \SU(J_1,\tau,\O_M)$. 
 \end{proof}
 
 By combining Lemma \ref{lem:deltunitary} and Proposiiton \ref{prop:allisom} we get the following corollary
 
 \begin{corollary}\label{cor:candgroup}
 	For each $N=\HH^n/\Delta\in \CC_n$ there are infinitely many lattices $\Lambda\subset \SL_{n+1}(\R)$ that contains a subgroup $\Delta'$ isomorphic to $\Delta$. 
 \end{corollary}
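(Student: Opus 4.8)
The plan is to read this off from the constructions immediately preceding the statement, the only point requiring an argument being the word \emph{infinitely}. Fix $N=\HH^n/\Delta\in\CC_n$ together with the form $J_1$ and the totally real field $L$ of degree $d+1$ for which $\Delta\subset\SO(J_1,\O_L)$, and let $\sigma_0=\mathrm{Id},\sigma_1,\dots,\sigma_d$ be the real embeddings of $L$. By Lemma 3.1 of \cite{BLThinBend} there is a unit $u_0\in\O_L^\times$ with $\abs{u_0}>2$ and $0<\abs{\sigma_i(u_0)}<1$ for $1\le i\le d$. First I would observe that, for every integer $k\ge 1$, the power $u_k:=u_0^k$ is again a unit with $\abs{u_k}>2$ and $0<\abs{\sigma_i(u_k)}<1$, so each $u_k$ is admissible for the construction above: setting $M_k=L(\sqrt{u_k^2-4})$ one has $s_L(u_k^2-4)=0$ (the non-identity embeddings send $u_k^2-4$ into $(-4,-3)$), so $\Lambda_k:=\SU(J_1,\tau_k,\O_{M_k})$, with $\tau_k$ the nontrivial automorphism of $M_k/L$, is an arithmetic lattice in $\SL_{n+1}(\R)$.

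Next I would apply the two quoted results with $t_k=\log u_k$: Lemma \ref{lem:deltunitary} gives $\Delta_{t_k}\subset\Lambda_k$, and Proposition \ref{prop:allisom} gives $\Delta_{t_k}\cong\Delta$. Thus for every $k\ge 1$ the lattice $\Lambda_k\subset\SL_{n+1}(\R)$ contains a subgroup $\Delta':=\Delta_{t_k}$ isomorphic to $\Delta$, which is the content of the corollary apart from the count.

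It remains to see that infinitely many of the $\Lambda_k$ are distinct. Since $\Lambda_k$ is completely determined by the field $M_k$ (both $\tau_k$ and the Hermitian form $H_1$ depend only on $M_k$ and $J_1$), it suffices to show that the extensions $M_k=L(\sqrt{u_0^{2k}-4})$ fall into infinitely many isomorphism classes over $L$. Two of them coincide exactly when $(u_0^{2k}-4)/(u_0^{2j}-4)\in(L^\times)^2$, and I would rule this out for all large $k,j$ by a short local computation: choosing a finite prime $\mathfrak q$ of $L$ at which $u_0$ is a unit of large multiplicative residue order and no $u_0^{2k}-4$ is divisible by $\mathfrak q$, the class of $u_0^{2k}-4$ in $L_{\mathfrak q}^\times/(L_{\mathfrak q}^\times)^2$ varies with $k$, so the global square classes cannot all agree. (In fact, since the defining field $M$ is a commensurability invariant of the arithmetic lattice $\SU(J_1,\tau,\O_M)$, one even gets infinitely many commensurability classes.) The construction itself is immediate from Lemma \ref{lem:deltunitary} and Proposition \ref{prop:allisom}; the only step needing genuine argument is this last one, and even there the work is a routine valuation estimate rather than a real obstacle.
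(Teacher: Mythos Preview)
Your core argument---taking $t_k=\log u_0^k$, applying Lemma~\ref{lem:deltunitary} to place $\Delta_{t_k}$ inside $\Lambda_k=\SU(J_1,\tau_k,\O_{M_k})$, and invoking Proposition~\ref{prop:allisom} to identify $\Delta_{t_k}$ with $\Delta$---is exactly what the paper does; the paper in fact records the corollary as an immediate consequence of those two results with no further proof. Your observation that every power $u_0^k$ again satisfies $\abs{u_0^k}>2$ and $0<\abs{\sigma_i(u_0^k)}<1$, hence $s_L(u_0^{2k}-4)=0$, is correct and is the natural way to generate many admissible parameters.

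The one genuine gap is in your attempt to certify the word \emph{infinitely} by showing the fields $M_k$ are pairwise distinct. The single--prime local computation you sketch cannot do this: for a fixed prime $\mathfrak q$ the residues $u_0^{2k}-4\pmod{\mathfrak q}$ are periodic in $k$, and $L_{\mathfrak q}^\times/(L_{\mathfrak q}^\times)^2$ is finite, so one prime separates only finitely many of the global square classes of $u_0^{2k}-4$. To actually obtain infinitely many distinct $M_k$ you would need to let the prime vary with the pair $(j,k)$ (for instance via primitive prime divisors of $u_0^{2k}-4$), or give some other global argument. The paper itself supplies no such argument; it is content to state the corollary as following from the two cited results, implicitly relying on the abundance of admissible choices of $u$ (equivalently, of square-free $r\in L$ with $r>0$ and $s_L(r)=0$). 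Your parenthetical claim that $M$ is a commensurability invariant of $\SU(J_1,\tau,\O_M)$, while plausible, would likewise require justification beyond what you have written.
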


 \section{Certifying thinness}\label{sec:thinness}
  
  The main goal of this section is to complete the proof of Theorem \ref{thm:mainthm}. The proof consist of proving that the subgroups constructed in the previous section are thin.

  \begin{proof}[Proof of Theorem \ref{thm:mainthm}]
  	Recall, that if $N=\HH^n/\Delta\in \CC_n$ from Corollary \ref{cor:candgroup} it follows that we can find a lattice $\Lambda\subset \SL_{n+1}(\R)$ and a subgroup $\Delta'\subset \Lambda$ that is isomorphic to $\Delta$. 
  	
  	Since $\Delta'$ was obtained from $\Delta$ via a bending construction it follows from \cite[Prop 4.1]{BLThinBend}  that $\Delta'$ is Zariski dense in $\SL_{n+1}(\R)$. The proof will be complete if we can show that $\Delta'$ has infinite index in $\Lambda$. Suppose for contradiction that this index is finite. Since $\Lambda$ is a lattice in $\SL_{n+1}(\R)$ this implies that $\Delta'$ is also a lattice in $\SL_{n+1}(\R)$. However, $\Delta'$ is isomorphic to $\Delta$ and $\Delta$ is a lattice in the Lie group $\SO(n,1)^\circ$. However, $\SO(n,1)^\circ$ and $\SL_{n+1}(\R)$ are not isomorphic and so this contradicts the Mostow rigidity theorem (see \cite[Thm 15.1.2]{WitteMorris}).
  \end{proof}   
  
  %\begin{proposition}
  %	The group $\Delta'$ is a thin subgroup of $\SL_{n+1}(\R)$
%  \end{proposition}
  
 % \begin{proof}
  	
  %\end{proof}
  
 % By construction, the field $L$ is totally real. Using Lemma 3.1 of \cite{BLThinBend} we can choose $u\in \O_L^\times$  to be a unit larger than 2 all of whose Galois conjugates are less than 1. Next, let $d$ be a root of $x^2-ux+1$, and let $F=L(d)$. If $\tau$ is the non-trivial Galois automorphism over $L$ then it is easy to check that $d\tau(d)=1$ ($d$ is $\tau$-unitary). If $t=\log(d)$ then it is easy to check that $\Delta_t$ is contained in the lattice $\SU(J_1,\tau,\O_F)$ in $\SL(n+1,\R)$. Using arguments from \cite{BLThinBend} that $\Delta$ is Zariski dense and has infinite index in $\SU(J_1,\tau,\O_F)$. 

%\printbibliography
\bibliographystyle{plain}
\bibliography{bibliography}

\begin{thebibliography}{10}

\bibitem{BLThinBend}
Samuel~A. Ballas and Darren~D. Long.
\newblock Constructing thin subgroups of {${\rm SL}(n + 1, \Bbb{R})$} via
  bending.
\newblock {\em Algebr. Geom. Topol.}, 20(4):2071--2093, 2020.

\bibitem{Bass}
Hyman Bass.
\newblock Groups of integral representation type.
\newblock {\em Pacific J. Math.}, 86(1):15--51, 1980.

\bibitem{BenoistCDIII}
Yves Benoist.
\newblock Convexes divisibles. {III}.
\newblock {\em Ann. Sci. \'Ecole Norm. Sup. (4)}, 38(5):793--832, 2005.

\bibitem{Bergeron}
Nicolas Bergeron.
\newblock Premier nombre de {B}etti et spectre du laplacien de certaines
  vari\'{e}t\'{e}s hyperboliques.
\newblock {\em Enseign. Math. (2)}, 46(1-2):109--137, 2000.

\bibitem{BourgainExpanderSurvey}
Jean Bourgain.
\newblock Some {D}iophantine applications of the theory of group expansion.
\newblock In {\em Thin groups and superstrong approximation}, volume~61 of {\em
  Math. Sci. Res. Inst. Publ.}, pages 1--22. Cambridge Univ. Press, Cambridge,
  2014.

\bibitem{BourGambSarnAffineSeive}
Jean Bourgain, Alex Gamburd, and Peter Sarnak.
\newblock Affine linear sieve, expanders, and sum-product.
\newblock {\em Invent. Math.}, 179(3):559--644, 2010.

\bibitem{ChenTwinPrime}
Jing~Run Chen.
\newblock On the representation of a large even integer as the sum of a prime
  and the product of at most two primes. {II}.
\newblock {\em Sci. Sinica}, 21(4):421--430, 1978.

\bibitem{CooperFuterThin}
D.~{Cooper} and D.~{Futer}.
\newblock {Ubiquitous quasi-Fuchsian surfaces in cusped hyperbolic
  3-manifolds}.
\newblock {\em ArXiv e-prints}, May 2017.

\bibitem{FuchsRivinThin}
Elena Fuchs and Igor Rivin.
\newblock Generic thinness in finitely generated subgroups of {${\rm SL}_n(\Bbb
  Z)$}.
\newblock {\em Int. Math. Res. Not. IMRN}, (17):5385--5414, 2017.

\bibitem{GPS}
M.~Gromov and I.~Piatetski-Shapiro.
\newblock Nonarithmetic groups in {L}obachevsky spaces.
\newblock {\em Inst. Hautes \'{E}tudes Sci. Publ. Math.}, (66):93--103, 1988.

\bibitem{JohnMill}
Dennis Johnson and John~J. Millson.
\newblock Deformation spaces associated to compact hyperbolic manifolds.
\newblock In {\em Discrete groups in geometry and analysis ({N}ew {H}aven,
  {C}onn., 1984)}, Progr. Math.

\bibitem{KahnLabourieMozes}
Jeremy Kahn, Francois Labourie, and Shahar Mozes.
\newblock Surface subgroups in uniform lattices of some semi-simple lie groups.
\newblock {\em in preparation}, 2018.

\bibitem{KahnMark}
Jeremy Kahn and Vladimir Markovic.
\newblock Immersing almost geodesic surfaces in a closed hyperbolic three
  manifold.
\newblock {\em Ann. of Math. (2)}, 175(3):1127--1190, 2012.

\bibitem{KahnWright}
Jeremy {Kahn} and Alex {Wright}.
\newblock {Nearly Fuchsian surface subgroups of finite covolume Kleinian
  groups}.
\newblock {\em arXiv e-prints}, page arXiv:1809.07211, Sep 2018.

\bibitem{LubotExpanderSurvey}
Alexander {Lubotzky}.
\newblock {Expander Graphs in Pure and Applied Mathematics}.
\newblock {\em arXiv e-prints}, page arXiv:1105.2389, May 2011.

\bibitem{MarquisBending}
Ludovic Marquis.
\newblock Exemples de vari\'et\'es projectives strictement convexes de volume
  fini en dimension quelconque.
\newblock {\em Enseign. Math. (2)}, 58(1-2):3--47, 2012.

\bibitem{MRSCollisions}
D.~B. McReynolds, Alan~W. Reid, and Matthew Stover.
\newblock Collisions at infinity in hyperbolic manifolds.
\newblock {\em Math. Proc. Cambridge Philos. Soc.}, 155(3):459--463, 2013.

\bibitem{MilaThesis}
Olivier Mila.
\newblock {\em The trace field of hyperbolic gluings}.
\newblock 2019.
\newblock Thesis (Ph.D.)--Universit\"at Bern.

\bibitem{Rat}
John~G. Ratcliffe.
\newblock {\em Foundations of hyperbolic manifolds}, volume 149 of {\em
  Graduate Texts in Mathematics}.
\newblock Springer, New York, second edition, 2006.

\bibitem{VinbergFieldofDef}
\`E.~B. Vinberg.
\newblock Rings of definition of dense subgroups of semisimple linear groups.
\newblock {\em Izv. Akad. Nauk SSSR Ser. Mat.}, 35:45--55, 1971.

\bibitem{WitteMorris}
D.~{Witte Morris}.
\newblock {Introduction to Arithmetic Groups}.
\newblock {\em ArXiv Mathematics e-prints}, June 2001.

\end{thebibliography}

\end{document}